\newtheorem{theorem}{Theorem}[section]
\newtheorem{lemma}[theorem]{Lemma}
\newtheorem{proposition}[theorem]{Proposition}
\newtheorem*{claim*}{Claim}
\theoremstyle{definition}
\newtheorem{definition}[theorem]{Definition}
\theoremstyle{remark}
\numberwithin{equation}{section}
\numberwithin{equation}{section}
\newsavebox{\savepar}
\begin{document}

\title{Existence of solution of the p(x)-Laplacian problem involving critical exponent and radon measure}
\author{Amita Soni and D. Choudhuri}

\date{}
\maketitle

\begin{abstract}
\noindent In this paper we are proving the existence of a nontrivial solution of the {\it{p}}(x)- Laplacian equation with Dirichlet boundary condition. We will use the variational method and concentration compactness principle involving positive radon measure $\mu$.
\begin{align*}
\begin{split}
-\Delta_{p(x)}u & = |u|^{q(x)-2}u+f(x,u)+\mu\,\,\mbox{in}\,\,\Omega,\\
u & = 0\,\, \mbox{on}\,\, \partial\Omega,
\end{split}
\end{align*}
where $\Omega \subset \mathbb{R}^N$ is a smooth bounded domain, $\mu > 0$ and $1 <  p^{-}:=\underset{x\in \Omega}{\text{inf}}\;p(x) \leq p^{+}:= \underset{x\in \Omega}{\text{sup}}\;p(x)  < q^{-}:=\underset{x\in \Omega}{\text{inf}}\;q(x)\leq q(x) \leq p^{\ast}(x) < N$. The function $f$ satisfies certain conditions. Here, $q^{\prime}(x)=\frac{q(x)}{q(x)-1}$ is the conjugate of $q(x)$ and $p^{\ast}(x)=\frac{Np(x)}{N-p(x)}$ is the Sobolev conjugate of $p(x)$.
\end{abstract}
\begin{flushleft}

{\bf Keywords}:~ Radon measure, concentration compactness principle, truncation function.
\end{flushleft}

\section{Introduction}
Existence results for the problem involving the critical exponent case has been studied by many researchers, for example readers may refer \cite{Napoli}, \cite{Azorero}, \cite{Bonder1}, \cite{Zhang}, \cite{Silva}  and references therein. In \cite{Napoli} the authors have proved the existence of multiple solutions for critical case with $p$-Laplacian operator by using manifold technique. In \cite{Azorero} the authors have proved the existence and non-existence of solution of problem involving critical exponent by using concentration-compactness principle for different values of $\lambda$. In \cite{Bonder1} the authors dealt with $p(x)$-Laplacian operator with critical exponent and applied concentration-compactness principle for proving existence of solution. Many problems have also been solved with measure term in variable exponent space. For example refer \cite{Bend}, \cite{Azroul} and other references therein. In \cite{Bend}, the authors have shown the existence of a distributional solution and in \cite{Azroul} the authors have shown the existence of an entropy solution.In {\cite{Choudhuri}} we have proved the existence of multiple solution of p-Laplacian problem without Ambrosetti-Rabinowitz condition and with measure term. Motivated by this paper we are considering a similar type of problem. In this paper we are trying to extend the result by proving the existence of a nontrivial solution for {\it{p}(x)}-Laplacian problem involving an exponent $q(x)$ which is allowed to be critical in bounded domains with positive radon measure. In this work we will mainly use variational method and concentration-compactness principle. The problem which we have addressed in this article is as follows.
\begin{align*}
\begin{split}
(P):~~-\Delta_{p(x)}u & = |u|^{q(x)-2}u+f(x,u)+\mu\,\,\mbox{in}\,\,\Omega,\\
u & = 0\,\, \mbox{on}\,\, \partial\Omega,\label{main_prob}
\end{split}
\end{align*}
where $\mu > 0$ is a Radon measure and $1 < p(x) \leq  \underset{x\in \Omega}{\text{sup}\;}p(x):= p^{+} < q^{-}:=\underset{x\in \Omega}{\text{inf}}\;q(x)\leq q(x) \leq p^{\ast}(x) < N$.\\
The problem $(P)$ is new in the sense that we have tackled the presence of a Radon measure and a variable critical exponent together. The conditions assumed on the function $f$ are as follows.\\
$(f_{1})\; f(x,0)=0$ and $f$ is measurable with respect to first variable and continuous with respect to second variable.\\
$(f_{2})\; \exists$\; $c_{1}\in [p^{+},q^{-})$ s.t. $0 < c_{1}\int_{\Omega}F(x,t)dt\leq \int_{\Omega}f(x,t)t$ a.e. $x \in \Omega$ where $F(x,t) :=
\int_0^{t}f(x,s)ds$ being the primitive of $f(x,t)$.\\
$(f_{3})  \underset{\mid t\mid \rightarrow \infty}{\text{lim}}\frac{f(x,t)}{{\mid t\mid}^{q(x)-1}}= 0$ uniformly a.e. $x \in \Omega$.\\
An example of a function satisfying the above conditions is 
$f(x,t)=|t|^{r(x)-1}$;\\ where $c_{1} < r(x) < q(x)$.\\
Throughout this article, we will denote the measure of a measurable set $E$ of $\Omega$ by $|E|$ and the absolute value of any real number, say $a$, as $|a|$. We will denote ${\parallel .\parallel}_{W_{0}^{1,p(x)}}= {\parallel .\parallel}$.

\begin{theorem}
Suppose that $(f_{1})-(f_{3})$ hold. Then problem $(P)$ possesses a nontrivial weak solution.
\end{theorem}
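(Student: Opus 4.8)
The plan is to realize weak solutions of $(P)$ as critical points of the energy functional
\[
I(u)=\int_\Omega\frac{1}{p(x)}|\nabla u|^{p(x)}\,dx-\int_\Omega\frac{1}{q(x)}|u|^{q(x)}\,dx-\int_\Omega F(x,u)\,dx-\int_\Omega u\,d\mu
\]
on the variable exponent space $X=W_0^{1,p(x)}(\Omega)$ with its Luxemburg-type norm $\|\cdot\|$. First I would record the functional-analytic facts needed in this setting: reflexivity of $X$, the relation between the modular $\int_\Omega|\nabla u|^{p(x)}\,dx$ and $\|u\|$, the compact subcritical embeddings $X\hookrightarrow\hookrightarrow L^{s(x)}(\Omega)$ for $s(x)<p^{\ast}(x)$, and continuity of the critical embedding $X\hookrightarrow L^{p^{\ast}(x)}(\Omega)$. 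The linear term $u\mapsto\int_\Omega u\,d\mu$ is the delicate ingredient, since a general positive Radon measure need not pair continuously with $X$; I would address this by a truncation/approximation of $\mu$, replacing it by functions $\mu_n\in L^{q'(x)}(\Omega)$ with $\mu_n\rightharpoonup\mu$ in the sense of measures, so that on each approximate level the term is a bounded linear functional on $X$ and $I_n\in C^1(X,\mathbb{R})$.

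Second, I would verify the mountain pass geometry for $I_n$. Using $(f_2)$--$(f_3)$ and $p^{+}<q^{-}$, the superlinear terms are controlled so that $I_n(u)\ge a\|u\|^{p^{+}}-b\|u\|^{q^{-}}-\|\mu_n\|_{\ast}\|u\|$ for small $\|u\|$. The presence of the linear measure term means the origin is not itself a local minimum, so I would either exploit smallness of $\|\mu\|_{\ast}$ to produce a strictly positive barrier on a sphere $\|u\|=\rho$, or locate a local minimizer of negative energy near the origin. The Ambrosetti--Rabinowitz condition $(f_2)$ together with the dominating term $-\tfrac{1}{q(x)}|u|^{q(x)}$ gives a ray along which $I_n(tu)\to-\infty$, furnishing the far endpoint $e$. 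This yields a Cerami (or Palais--Smale) sequence at the minimax level $c_n$.

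Third, and this is the crux, I would establish compactness of the Cerami sequence below a critical threshold. Boundedness follows from $(f_2)$ via the standard estimate on $I_n(u_n)-\tfrac{1}{c_1}\langle I_n'(u_n),u_n\rangle$, so $u_n\rightharpoonup u$ in $X$. Because $q(x)$ may touch $p^{\ast}(x)$, I would invoke the concentration--compactness principle for variable exponents to express the weak-$\ast$ limits of $|\nabla u_n|^{p(x)}$ and $|u_n|^{p^{\ast}(x)}$ as their absolutely continuous parts plus at most countably many Dirac masses $\nu=\sum_j\nu_j\delta_{x_j}$ and $\eta=\sum_j\eta_j\delta_{x_j}$, linked by a Sobolev-type inequality $\nu_j\ge S\,\eta_j^{p(x_j)/p^{\ast}(x_j)}$. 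The key estimate is to show that $c_n$ lies strictly below the threshold $\tfrac{1}{N}S^{N/p(x_j)}$ at which concentration is energetically possible, which forces all $\nu_j=\eta_j=0$; this is where $(f_2)$ and an appropriate test family supported near a candidate concentration point enter, uniformly in $n$. Ruling out concentration yields strong convergence $u_n\to u$ in $L^{q(x)}(\Omega)$ and hence $I_n'(u)=0$.

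Finally, I would pass to the limit $n\to\infty$: with uniform bounds on the approximate solutions and $\mu_n\rightharpoonup\mu$, a further concentration--compactness argument identifies the limit $u$ as a weak solution of $(P)$ carrying the measure datum $\mu$. Nontriviality is guaranteed by the positive minimax level, which remains bounded away from zero under the geometry established above. I expect the principal obstacle to be the simultaneous control of two distinct compactness defects, namely the critical Sobolev concentration handled by the variable-exponent concentration--compactness principle and the measure-data approximation, so that the threshold estimate on $c_n$ must be made uniform in the approximation parameter $n$; keeping these two limiting procedures compatible is the technical heart of the argument.
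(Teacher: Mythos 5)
Your first three steps (approximating $\mu$ by smooth functions $\mu_n$, verifying mountain pass geometry for $I_n$, and ruling out concentration via the variable-exponent concentration--compactness principle below a threshold level) follow essentially the same route as the paper, and they are sound modulo the same caveats the paper itself faces. The genuine gap is in your final step, the passage $n\to\infty$. You write that ``with uniform bounds on the approximate solutions and $\mu_n\rightharpoonup\mu$, a further concentration--compactness argument identifies the limit,'' but you never produce a mechanism for those uniform bounds, and no such bounds are available in $X=W_0^{1,p(x)}(\Omega)$. All the estimates you derive at fixed $n$ (the mountain pass barrier, the Palais--Smale bound from $I_n(u)-\tfrac{1}{c_1}\langle I_n'(u),u\rangle$, and the concentration threshold) involve the dual norm of $\mu_n$, e.g. $\|\mu_n\|_{p'(x)}$ or $\|\mu_n\|_{q'(x)}$. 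Since a general positive Radon measure does not belong to $\left(W_0^{1,p(x)}(\Omega)\right)'$, these dual norms typically blow up as $n\to\infty$; only the $L^1$-norm of $\mu_n$ stays bounded. Consequently the critical points $u_n$ need not be bounded in $W_0^{1,p(x)}(\Omega)$ at all, and the limit solution cannot be expected to live in that space. This is not a technicality: it is the reason the paper's notion of weak solution is formulated in $W_0^{1,s(x)}(\Omega)$ with $s(x)<s=\min\left\lbrace 1-\frac{1}{\gamma},\frac{(\gamma-1)p^-}{2\gamma-1}\right\rbrace$, a strictly weaker space.

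The missing idea is the Boccardo--Gallou\"{e}t-type truncation argument that the paper uses precisely at this point. One tests the equation for $u_n$ against the truncation $T_k(u_n)$, which is admissible and bounded by $k$, so that the measure term is controlled by $k\int_\Omega \mu_n\,dx$, i.e.\ only by the uniform $L^1$-bound of $(\mu_n)$. This yields $\int_\Omega|\nabla T_k(u_n)|^{p(x)}dx\leq Ck$ for all $k>1$, from which one derives level-set (Marcinkiewicz) estimates: $(u_n)$ is bounded in $M^{1-\frac{1}{\gamma}}(\Omega)$ and $(\nabla u_n)$ is bounded in $M^{\frac{(\gamma-1)p^-}{2\gamma-1}}(\Omega)$, hence $(u_n)$ is bounded in $W_0^{1,s(x)}(\Omega)$ for $s(x)<s$, uniformly in $n$. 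Only then can one extract a weakly convergent subsequence and pass to the limit in the weak formulation to obtain a solution of $(P)$. Without this (or an equivalent device exploiting that the data are merely $L^1$-bounded), your final step does not go through; the ``two compatible limiting procedures'' you flag as the technical heart cannot both be run inside $W_0^{1,p(x)}(\Omega)$, because the second one leaves that space.
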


\section{Premilinaries}
\subsection{Definitions}
\begin{definition}\label{defn1}
	Let $(\mu_n)$ be a bounded sequence of measures in
	$\mathfrak{M}(\Omega)$. We say that $(\mu_n)$ converges to a measure $\mu \in \mathfrak{M}(\Omega)$ in the sense of measure if 
	$$\displaystyle{\int_{\Omega}\phi d\mu_n} \rightarrow
	\int_{\Omega}\phi d\mu\,\, \,\,\,\forall\,\,\phi\in
	C_0(\bar{\Omega}).$$ We denote this convergence by $\mu_n
	\xrightharpoonup{} \mu$. The topology defined via this weak
	convergence is metrizable and a bounded sequence with respect to
	this topology is pre-compact.
\end{definition}
\begin{definition}
The Marcinkiewicz space  ${M}^q(\Omega)$ \cite{Benilan} (or the weak $L^q(\Omega)$ space)  defined for every $0 < q <\infty$, as the space of all measurable functions $f:\Omega\rightarrow \mathbb{R}$ such that the corresponding distribution satisfy an estimate of the form
$$|\{x\in \Omega:|f(x)|>t\}|\leq \frac{C}{t^q},\hspace{0.4cm}t>0,C<\infty.$$ 
\end{definition}
For bounded $\Omega$ we have ${M}^q\subset {M}^{\bar{q}}$ if $q\geq \bar{q}$, for some fixed positive $\bar{q}$. We recall here the following useful continuous embeddings
\begin{equation}\label{mar}
L^q(\Omega)\hookrightarrow {M}^q(\Omega)\hookrightarrow L^{q-\epsilon}(\Omega),
\end{equation}
for every $1<q<\infty$ and $0<\epsilon<q-1$.\\

\subsection{Variable exponent Sobolev space}
For each open subset $\Omega \subset \mathbb{R}^{N}(N \geq 2)$, we define $C_{+}(\overline{\Omega})=\lbrace p\;|\;p\in C(\overline{\Omega}), p(x) > 1\; {\text{for any}}\; x\in \overline{\Omega}\rbrace$ and $1 < p^{-}:=\underset{x\in \Omega}{\text{inf}}\;p(x)\leq \underset{x\in\Omega}{\text{sup}}\;p(x)=: p^{+} < N.$ The variable exponent Lebesgue space $L^{p(x)}(\Omega)$ is defined by\\
$$L^{p(x)}(\Omega)=\left\lbrace u:\Omega\rightarrow\mathbb{R}\;|\;u \;{\text{is measurable and}} \int_{\Omega}|u|^{p(x)}dx < \infty\right\rbrace$$ endowed with the norm (the Luxemburg norm)
 $|u|_{p(x)}={\text{inf}}\lbrace \lambda > 0\;|\;\int_{\Omega}|\frac{u}{\lambda}|^{p(x)}dx\leq 1\rbrace$.\\
We will define variable exponent Sobolev space as $$W^{1,p(x)}(\Omega)=\lbrace u\in L^{p(x)}(\Omega)\;|\;|\nabla u|\in L^{p(x)}(\Omega)\rbrace$$ with the norm $\|u\|_{1,p(x)}=|u|_{p(x)}+|\nabla u|_{p(x)}$.
With these norms, $L^{p(x)}(\Omega)$ and $W^{1,p(x)}(\Omega)$ are separable reflexive Banach spaces(\cite{Kovacik}). For $p(x)\equiv p,\; p(x)$- Laplacian reduces to $p$-Laplacian.
\begin{proposition} 
Set $\rho(u):=\int_{\Omega}|u(x)|^{p(x)}dx$. For $u\in L^{p(x)}(\Omega)$ and $(u_{n})_{n\in\mathbb{N}}\subset L^{p(x)}(\Omega)$, we have
\begin{itemize}
\item $u\neq 0\Rightarrow \|u\|_{L^{p(x)}}(\Omega)=\lambda$ iff $\rho(\frac{u}{\lambda})=1$,
\item $\|u\|_{L^{p(x)}(\Omega)} < 1(=1;> 1)\Leftrightarrow \rho(u) < 1(=1; >1)$,
\item $\|u\|_{L^{p(x)}(\Omega)} < 1\Rightarrow \|u\|_{L^{p(x)}(\Omega)}^{p^{+}}\leq \rho(u)\leq \|u\|_{L^{p(x)}(\Omega)}^{p^{-}},$
\item $\|u\|_{L^{p(x)}(\Omega)} > 1\Rightarrow \|u\|_{L^{p(x)}(\Omega)}^{p^{-}}\leq \rho(u)\leq \|u\|_{L^{p(x)}(\Omega)}^{p^{+}},$
\item $\underset{n\rightarrow\infty}{\text{lim}}||u_{n}||_{L^{p(x)}(\Omega)}=0(\infty)\Leftrightarrow \underset{n\rightarrow\infty}{\text{lim}}\rho(u_{n})=0(\infty)$.
\end{itemize}
\end{proposition}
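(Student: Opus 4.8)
The plan is to derive all five assertions from the structural properties of the single-variable function
$g(\lambda):=\rho(u/\lambda)=\int_{\Omega}|u(x)|^{p(x)}\lambda^{-p(x)}\,dx$, defined for a fixed $u\in L^{p(x)}(\Omega)\setminus\{0\}$ and $\lambda>0$, and then to read off the modular--norm comparisons by elementary scaling estimates. The first bullet is the backbone, and I would prove it first; the remaining four are essentially corollaries.

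For the first bullet, I would note that for each fixed $x$ the integrand $|u(x)|^{p(x)}\lambda^{-p(x)}$ is continuous and strictly decreasing in $\lambda$ on $(0,\infty)$. Since $u\in L^{p(x)}(\Omega)$ forces $\rho(u/\lambda)<\infty$ for every $\lambda>0$ (it is bounded by $\lambda^{-p^{+}}\rho(u)$ when $\lambda<1$ and by $\rho(u)$ when $\lambda\ge 1$), the dominated convergence theorem gives continuity of $g$, and monotone/dominated convergence gives the boundary behaviour $g(\lambda)\to\infty$ as $\lambda\to 0^{+}$ and $g(\lambda)\to 0$ as $\lambda\to\infty$. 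By the intermediate value theorem there is therefore a \emph{unique} $\lambda_{0}>0$ with $g(\lambda_{0})=1$. Because $g$ is strictly decreasing, the set $\{\lambda>0:g(\lambda)\le 1\}$ is exactly $[\lambda_{0},\infty)$, so its infimum, which is the Luxemburg norm, equals $\lambda_{0}$; that is, $\|u\|_{L^{p(x)}(\Omega)}=\lambda_{0}$ and $\rho(u/\|u\|_{L^{p(x)}(\Omega)})=1$, proving the first bullet. I expect this to be the only step requiring genuine care, the subtle points being the finiteness of $g$, its continuity, and the attainment and uniqueness delivered by strict monotonicity.

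With $g(\|u\|_{L^{p(x)}(\Omega)})=1$ in hand, the second bullet follows purely from the strict monotonicity of $g$ by comparing $g(1)=\rho(u)$ with $g(\|u\|_{L^{p(x)}(\Omega)})=1$: if $\|u\|_{L^{p(x)}(\Omega)}<1$ then $1>\|u\|_{L^{p(x)}(\Omega)}$ forces $\rho(u)=g(1)<g(\|u\|_{L^{p(x)}(\Omega)})=1$, and symmetrically for the equality and $>1$ cases, the converses being immediate from trichotomy. For the third and fourth bullets I would set $\lambda=\|u\|_{L^{p(x)}(\Omega)}$, use the normalisation $\int_{\Omega}|u|^{p(x)}\lambda^{-p(x)}\,dx=1$, and exploit the elementary inequalities relating $\lambda^{-p(x)}$ to $\lambda^{-p^{-}}$ and $\lambda^{-p^{+}}$. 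When $\lambda<1$ one has $\lambda^{-p^{-}}\le\lambda^{-p(x)}\le\lambda^{-p^{+}}$, and integrating against $|u|^{p(x)}$ yields $\lambda^{-p^{-}}\rho(u)\le 1\le\lambda^{-p^{+}}\rho(u)$, i.e. $\|u\|_{L^{p(x)}(\Omega)}^{p^{+}}\le\rho(u)\le\|u\|_{L^{p(x)}(\Omega)}^{p^{-}}$; when $\lambda>1$ the inequalities reverse and give $\|u\|_{L^{p(x)}(\Omega)}^{p^{-}}\le\rho(u)\le\|u\|_{L^{p(x)}(\Omega)}^{p^{+}}$.

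Finally, the fifth bullet follows by combining the third and fourth with the second. If $\|u_{n}\|_{L^{p(x)}(\Omega)}\to 0$, then eventually $\|u_{n}\|_{L^{p(x)}(\Omega)}<1$ and the third bullet gives $\rho(u_{n})\le\|u_{n}\|_{L^{p(x)}(\Omega)}^{p^{-}}\to 0$; conversely $\rho(u_{n})\to 0$ forces $\|u_{n}\|_{L^{p(x)}(\Omega)}<1$ eventually by the second bullet, whence $\|u_{n}\|_{L^{p(x)}(\Omega)}^{p^{+}}\le\rho(u_{n})\to 0$. The case $\infty$ is handled identically using the fourth bullet for $\|u_{n}\|_{L^{p(x)}(\Omega)}>1$. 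No step here presents any real obstacle once the first bullet is secured.
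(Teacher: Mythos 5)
Your proof is correct, but there is nothing in the paper to compare it against: the paper states this proposition without proof, quoting it as part of the standard theory of variable-exponent Lebesgue spaces (the surrounding citations are to \cite{Fan}, \cite{Kovacik}, \cite{Zhao}, \cite{Diening}). What you have written is essentially the standard textbook argument from those sources: study $g(\lambda)=\rho(u/\lambda)$, establish finiteness via the bounds $g(\lambda)\leq \lambda^{-p^{+}}\rho(u)$ for $\lambda<1$ and $g(\lambda)\leq\rho(u)$ for $\lambda\geq 1$, continuity by dominated convergence, strict monotonicity from $p(x)\geq p^{-}>1$ on the positive-measure set $\{u\neq 0\}$, and the limits $g(0^{+})=\infty$, $g(\infty)=0$; then the Luxemburg norm is the unique root of $g(\lambda)=1$, and the remaining four bullets follow by comparing $g(1)=\rho(u)$ with $g(\|u\|_{L^{p(x)}(\Omega)})=1$ and by the elementary estimates $\lambda^{-p^{-}}\leq\lambda^{-p(x)}\leq\lambda^{-p^{+}}$ ($\lambda<1$) and their reversal ($\lambda>1$). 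Two cosmetic points worth tidying: bullets two through five are stated without the hypothesis $u\neq 0$, so you should dispose of the trivial case $u=0$ (where $\|u\|_{L^{p(x)}(\Omega)}=\rho(u)=0$) separately, since your function $g$ and the normalization $\rho(u/\|u\|_{L^{p(x)}(\Omega)})=1$ are only available when $u\neq 0$; and your extraction of norm convergence from modular convergence in the last bullet uses $p^{+}<\infty$, which does hold here because the paper assumes $p^{+}<N$. Neither point affects correctness; your write-up in effect makes self-contained a statement the paper leaves to the literature.
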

We state the generalized H\"{o}lder inequality and embedding results in the following propositions (\cite{Fan}, \cite{Kovacik}, \cite{Zhao}, \cite{Diening}).
\begin{proposition}
For any $u\in L^{p(x)}(\Omega)$ and $v\in L^{p^{\prime}(x)}(\Omega)$, where $L^{p^{\prime}(x)}(\Omega)$ is the conjugate space of $L^{p(x)}(\Omega)$ such that $\frac{1}{p(x)}+\frac{1}{p^{\prime}(x)}=1$,\\
$\big|\int_{\Omega}uv\;dx\big|\leq \left(\frac{1}{p^{-}}+\frac{1}{p^{-\prime}}\right) \|u\|_{p(x)}\|v\|_{p^{\prime}(x)}$
\end{proposition}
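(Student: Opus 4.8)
The plan is to reduce the statement to a pointwise application of Young's inequality after a normalization that exploits the modular characterization of the Luxemburg norm recorded in the preceding Proposition. First I would dispose of the trivial case: if either $\|u\|_{p(x)}=0$ or $\|v\|_{p'(x)}=0$, then $u=0$ or $v=0$ a.e., both sides vanish, and there is nothing to prove. So assume $u\neq 0$ and $v\neq 0$ and set $\hat u:=u/\|u\|_{p(x)}$ and $\hat v:=v/\|v\|_{p'(x)}$. Taking $\lambda=\|u\|_{p(x)}$ in the first property listed in the preceding Proposition gives $\rho(\hat u)=\int_\Omega|\hat u|^{p(x)}\,dx=1$, and the same reasoning applied with the conjugate exponent $p'(x)$ yields $\int_\Omega|\hat v|^{p'(x)}\,dx=1$. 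This converts the abstract norm into a concrete unit modular value, which is exactly what Young's inequality can digest.

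The core step is the pointwise Young inequality. For a.e.\ $x\in\Omega$ the numbers $p(x)$ and $p'(x)$ form a conjugate pair with $\frac{1}{p(x)}+\frac{1}{p'(x)}=1$, both exceeding $1$, so for nonnegative reals $a,b$ one has $ab\le \frac{a^{p(x)}}{p(x)}+\frac{b^{p'(x)}}{p'(x)}$. Applying this with $a=|\hat u(x)|$ and $b=|\hat v(x)|$ gives, for a.e.\ $x$,
\[
|\hat u(x)\,\hat v(x)|\le \frac{|\hat u(x)|^{p(x)}}{p(x)}+\frac{|\hat v(x)|^{p'(x)}}{p'(x)}.
\]
The right-hand side is integrable by the normalization above, so the left-hand side is too and all the integrals below are finite.

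Finally I would integrate over $\Omega$ and replace the variable reciprocals by their largest possible values: since $p(x)\ge p^{-}$ and, writing $p^{-\prime}$ for the infimum of $p'(\cdot)$ over $\Omega$, $p'(x)\ge p^{-\prime}$, we have $\frac{1}{p(x)}\le \frac{1}{p^{-}}$ and $\frac{1}{p'(x)}\le \frac{1}{p^{-\prime}}$ pointwise, whence
\[
\int_\Omega|\hat u\,\hat v|\,dx\le \frac{1}{p^{-}}\int_\Omega|\hat u|^{p(x)}\,dx+\frac{1}{p^{-\prime}}\int_\Omega|\hat v|^{p'(x)}\,dx=\frac{1}{p^{-}}+\frac{1}{p^{-\prime}}.
\]
Multiplying through by $\|u\|_{p(x)}\|v\|_{p'(x)}$, unwinding the definitions of $\hat u,\hat v$, and using $\big|\int_\Omega uv\,dx\big|\le \int_\Omega|uv|\,dx$ yields the claimed bound. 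The argument is essentially routine; the only point that genuinely requires care is the first paragraph, the passage from the norm to the unit modular value via the homogeneity of the functional, together with bookkeeping the constant correctly through the infima $p^{-}$ and $p^{-\prime}$. I do not anticipate any real obstacle beyond these two points.
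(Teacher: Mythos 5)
Your proof is correct. Note, however, that the paper itself offers no proof of this proposition: it is stated as a known result quoted from the cited literature (Fan--Zhao, Kov\'{a}\v{c}ik--R\'{a}kosn\'{i}k, Diening et al.), so there is no internal argument to compare against; what you have written is precisely the standard proof from those references. Two points you handled that deserve emphasis: first, the normalization step $\rho(u/\|u\|_{p(x)})=1$ is legitimate here because $p^{+}<\infty$ (for unbounded exponents the modular at the norm can be strictly less than $1$), and it is exactly the first item of the paper's preceding proposition, as you say; second, the constant only works out under your reading of $p^{-\prime}$ as $\inf_{\Omega}p'(x)$, which equals $(p^{+})'$ by monotonicity of $t\mapsto t/(t-1)$ --- under the alternative reading $(p^{-})'$, which is $\sup_{\Omega}p'(x)$, the pointwise bound $1/p'(x)\leq 1/p^{-\prime}$ would fail, so your explicit choice of interpretation is the right one and is what the cited sources intend.
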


\begin{proposition}
(i) If $q\in C_{+}(\overline{\Omega})$ and $q(x) < p^{\ast}(x)$ for any $x\in\overline{\Omega}$, then $W^{1,p(x)}(\Omega)\hookrightarrow L^{q(x)}(\Omega)$ is compact and continuous.\\
(ii) There exists a constant $c > 0$ such that $\|u\|_{p(x)} \leq c\|\nabla u\|_{p(x)}\;\forall\; u\in W_{0}^{1,p(x)}(\Omega)$.
\end{proposition}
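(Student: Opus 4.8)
The plan is to treat the two assertions separately, since both are by now standard structural facts about variable-exponent Sobolev spaces; I would follow the line of argument of the works cited above (in particular \cite{Fan} and \cite{Kovacik}), reducing everything to the norm--modular relations stated in the first Proposition of this section together with the generalized H\"older inequality.

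For the \emph{continuity} in (i), the key preliminary step is the critical Sobolev embedding $W^{1,p(x)}(\Omega)\hookrightarrow L^{p^{\ast}(x)}(\Omega)$. Since $\Omega$ is a smooth bounded domain, I would first extend each $u$ to $\mathbb{R}^N$ by a bounded linear extension operator. Using the uniform continuity of $p$ on the compact set $\overline{\Omega}$, I would cover $\overline{\Omega}$ by finitely many balls $B_i$ on each of which $p$ oscillates by less than a prescribed $\varepsilon$; on each $B_i$ the exponent is sandwiched between two constants and the classical constant-exponent Sobolev inequality applies. Patching the local estimates with a subordinate partition of unity and converting modular bounds to norm bounds via the norm--modular relations yields the global continuous embedding into $L^{p^{\ast}(x)}$. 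The subcritical case then follows: because $p^{\ast}$ and $q$ are continuous on the compact set $\overline{\Omega}$ with $q(x)<p^{\ast}(x)$ pointwise, the gap $\delta:=\min_{x\in\overline{\Omega}}\big(p^{\ast}(x)-q(x)\big)$ is strictly positive, so $L^{p^{\ast}(x)}(\Omega)\hookrightarrow L^{q(x)}(\Omega)$ continuously on the bounded domain $\Omega$, and composing the two embeddings gives continuity.

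For the \emph{compactness} in (i), I would take a bounded sequence $(u_n)$ in $W^{1,p(x)}(\Omega)$, pass to a weakly convergent subsequence $u_n\rightharpoonup u$, and show $u_n\to u$ strongly in $L^{q(x)}(\Omega)$. Working again on the finite cover by balls with nearly constant exponent, the classical Rellich--Kondrachov theorem gives strong convergence in the relevant constant-exponent Lebesgue space locally; the strict gap $\delta>0$ then provides the room to interpolate between this local strong convergence and the uniform bound in $L^{p^{\ast}(x)}$, ruling out loss of mass and upgrading local convergence to global strong convergence in $L^{q(x)}(\Omega)$. Finally, for the Poincar\'e inequality (ii), I would argue by density and prove the estimate first for $u\in C_0^{\infty}(\Omega)$, extended by zero outside $\Omega$. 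After a translation placing $\Omega$ inside a slab $\{0\le x_1\le d\}$, the fundamental theorem of calculus gives the pointwise bound $|u(x)|\le\int_0^d|\nabla u(t,x')|\,dt$ with $x'=(x_2,\dots,x_N)$; feeding this into the generalized H\"older inequality in the $t$-variable and integrating in $x'$ produces a modular estimate of the form $\rho(u)\le C\,\rho(\nabla u)$, which the norm--modular relations convert into $\|u\|_{p(x)}\le c\|\nabla u\|_{p(x)}$.

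The main obstacle I anticipate is twofold and stems from the variable exponent itself. In (i) the genuinely hard point is the global critical embedding: since $p^{\ast}(x)$ varies, the classical Sobolev inequality cannot be applied directly, and the localization and patching must be carried out carefully enough that the oscillation of $p$ does not spoil the uniformity of the embedding constant. In (ii) the subtlety is that the exponent does not commute with integration, so both the passage from the pointwise bound to a modular inequality via H\"older, and the final conversion between modular and norm (which behaves differently according to whether the norm is above or below $1$), must be handled with care, using the boundedness $1<p^{-}\le p^{+}<\infty$ to control the passage.
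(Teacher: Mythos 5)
A preliminary remark: the paper itself does not prove this proposition; it is stated as a known result with references to \cite{Fan}, \cite{Kovacik}, \cite{Zhao}, \cite{Diening}, so your proposal has to be judged against the standard proofs in those sources. Judged that way, each half of your plan has a genuine gap. In (i), the flaw is the detour through the critical embedding $W^{1,p(x)}(\Omega)\hookrightarrow L^{p^{\ast}(x)}(\Omega)$. Your patching argument cannot prove it: on a ball $B_i$ where $p$ oscillates by $\varepsilon$, the classical Sobolev inequality applied with the constant exponent $p_i^-:=\inf_{B_i}p$ only yields integrability up to $(p_i^-)^{\ast}$, which lies \emph{below} $p^{\ast}(x)$ on $B_i$ by a quantity of order $\varepsilon$; this loss survives the partition of unity, and you cannot then let $\varepsilon\to 0$ because the cover and all constants depend on $\varepsilon$. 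This is not a removable technicality: the critical embedding is a strictly stronger theorem whose known proofs require a quantitative modulus of continuity for $p$ (log-H\"older continuity), and it is not available for $p$ that is merely continuous, which is all that $C_{+}(\overline{\Omega})$ gives you. The repair is to abandon the critical embedding and run your localization directly at the subcritical level, exactly as in \cite{Kovacik} and \cite{Fan}: set $\delta:=\min_{\overline{\Omega}}(p^{\ast}-q)>0$ and choose the cover so fine that the induced oscillation of $p^{\ast}$ on each $B_i$ is less than $\delta/2$; then $q_i^{+}:=\sup_{B_i}q<(p_i^-)^{\ast}$, so $W^{1,p(x)}(B_i)\subset W^{1,p_i^-}(B_i)$ embeds compactly into $L^{q_i^{+}}(B_i)$ by Rellich--Kondrachov, and $L^{q_i^{+}}(B_i)\subset L^{q(x)}(B_i)$ because $B_i$ is bounded and $q\le q_i^{+}$ there. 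Summing over the finite cover yields continuity and compactness simultaneously; the interpolation against a critical bound in your compactness step becomes unnecessary.

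In (ii) the gap is fatal as written: the modular inequality $\rho(u)\le C\,\rho(\nabla u)$ that your slab argument is designed to produce is \emph{false} in general for variable exponents (it fails, for instance, whenever $p$ has a strict interior local minimum; equivalently, the modular Rayleigh quotient can have infimum zero), so no correct argument can establish it. The failure is already visible in your H\"older step: at a fixed point $x=(x_1,x')$, H\"older in $t$ gives $|u(x)|^{p(x)}\le d^{\,p(x)-1}\int_0^d|\partial_1u(t,x')|^{p(x)}\,dt$, where the integrand carries the exponent $p(x)$ of the external point rather than $p(t,x')$ of the integration variable; integrating in $x$ therefore does not reconstruct $\rho(\nabla u)$, and there is no admissible way to swap the two exponents. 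The standard proof instead deduces (ii) from (i): taking $q=p$ in (i) (legitimate since $p(x)<p^{\ast}(x)$), the embedding $W_0^{1,p(x)}(\Omega)\hookrightarrow L^{p(x)}(\Omega)$ is compact. If the norm inequality failed, there would exist $u_n\in W_0^{1,p(x)}(\Omega)$ with $\|u_n\|_{p(x)}=1$ and $\|\nabla u_n\|_{p(x)}\to 0$; a subsequence would converge weakly in $W_0^{1,p(x)}(\Omega)$ and strongly in $L^{p(x)}(\Omega)$ to some $u$ with $\|u\|_{p(x)}=1$, while $\nabla u=0$ a.e.; but an element of $W_0^{1,p(x)}(\Omega)$ with vanishing gradient vanishes identically (extend by zero to $\mathbb{R}^N$), a contradiction. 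So the norm Poincar\'e inequality is true, but only in the norm form --- your route through the modular form cannot be completed.
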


\subsection{Functional analytic setup}
We first consider a sequence of problems $(P_{n})$ which are as follows.
\begin{align*}
\begin{split}
-\Delta_{p(x)}u & = |u|^{q(x)-2}u+f(x,u)+\mu_{n}\,\,\mbox{in}\,\,\Omega,\\
u & = 0\,\, \mbox{on}\,\, \partial\Omega,
\end{split}
\end{align*}
where $\mu_{n}$ are smooth functions such that $\mu_{n} \rightharpoonup \mu$ in measure in the sense of definition 2.1.\\
The corresponding energy functional to the sequence of problems $(P_{n})$ is given as
\begin{equation*} 
I_{n}(u)= \int_{\Omega}\frac{{|\nabla u|}^{p(x)}}{p(x)}dx -\int_{\Omega} \frac{{|u|}^{q(x)}}{q(x)}dx - \int_{\Omega}F(x,u) dx - \int_{\Omega}u d\mu_{n}.
\end{equation*}
The Fr\'{e}chet derivative of $I_{n}$ is defined as
\begin{equation*}
<I_{n}^{\prime}(u),v> = \int_{\Omega}{|\nabla u|}^{p(x)-2}\nabla u\nabla v dx - \int_{\Omega}{|u|}^{q(x)-2}uv dx - \int_{\Omega}f(x,u)v dx - \int_{\Omega}\mu_{n}v dx
\end{equation*}
$\forall u,v \in T$, where $T=W^{1, p(x)}(\Omega)\cap C_0(\bar{\Omega})$, $C_0(\bar{\Omega})=\{\varphi\in C(\bar{\Omega}):\varphi|_{\partial\Omega}=0\}$ and $C(\bar{\Omega})$ will denote the space of continuous functions over $\bar{\Omega}$. We now define the corresponding energy functional of the problem $(P)$ as 
\begin{equation*}
I(u)=\int_{\Omega}\frac{{|\nabla u|}^{p(x)}}{p(x)}dx -\int_{\Omega} \frac{{|u|}^{q(x)}}{q(x)}dx - \int_{\Omega}F(x,u) dx - \int_{\Omega}u d\mu 
\end{equation*}
and its Fr\'{e}chet derivative as 
\begin{equation*}
<I^{\prime}(u),v> = \int_{\Omega}{|\nabla u|}^{p(x)-2}\nabla u\nabla v dx -\int_{\Omega}{|u|}^{q(x)-2}uv dx - \int_{\Omega}f(x,u)v dx - \int_{\Omega}v d\mu
\end{equation*}
for every $u,v\in T^{\prime}$, where $T^{\prime}=W^{1, s(x)}(\Omega)\cap C_0(\bar{\Omega})$ and $1\leq s(x) < s={\text{min}}\left\lbrace 1-\frac{1}{\gamma},{\frac{(\gamma -1)p^{-}}{2\gamma-1}}\right\rbrace$.
\begin{definition}
$u\in W_{0}^{1,s(x)}$ is said to be a weak solution of the problem $(P)$ if
\begin{equation*}
\int_{\Omega}|\nabla u|^{p(x)-2}\nabla u\nabla\varphi dxdy - \int_\Omega |u|^{q-2}u\varphi dx - \int_\Omega f(x,u)\varphi dx - \int_{\Omega}\varphi d\mu =0,
\end{equation*}
\end{definition}
$\forall$\; $\varphi \in T^{\prime}$.\\
 \section{Existence Results}
To prove the main result of this paper which is given in form of Theorem 1.1, we need to first prove few lemmas related to the mountain pass theorem and Palais-Smale condition. It is clear that $I_{n}$ is $C^{1}$ functional\; $\forall\;n \geq 1$.

\begin{lemma}\label{lem1}
The functional $I_{n}$ satisfies mountain pass geometry in the sense that:
\begin{itemize}
\item $I_{n}(0)=0$
\item $\exists\; r, \eta > 0$ such that $I_{n}(u)\geq\eta$ if $\|u\| > r$.
\item $\exists\; u, \|u\| > r$ such that $I_{n}(u)\leq 0$.
\end{itemize}
\end{lemma}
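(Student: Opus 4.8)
The plan is to verify the three bullets in turn, reading the second one as the standard mountain‑pass requirement on a small sphere $\|u\|=r$: taken literally (for \emph{all} $\|u\|>r$) it would contradict the third bullet, since the critical term drives $I_n$ to $-\infty$ along rays, so I treat ``$>r$'' as a typo for ``$=r$''. The first bullet is immediate, since every integrand defining $I_n$ vanishes at $u=0$: indeed $F(x,0)=\int_0^0 f(x,s)\,ds=0$ and $f(x,0)=0$ by $(f_1)$, whence $I_n(0)=0$.

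For the lower bound near the origin I would estimate the four terms of $I_n$ separately for $\|u\|<1$. Using the modular--norm relation (for $\|u\|<1$ one has $\|u\|^{p^+}\le \int_\Omega |\nabla u|^{p(x)}\,dx$) together with the Poincar\'e inequality of Proposition~2.4(ii), the leading term satisfies $\int_\Omega |\nabla u|^{p(x)}/p(x)\,dx \ge \frac{1}{p^+}\|u\|^{p^+}$. The critical term is of higher order: the continuous embedding $W^{1,p(x)}\hookrightarrow L^{q(x)}$ gives $\int_\Omega |u|^{q(x)}/q(x)\,dx \le \frac{C}{q^-}\|u\|^{q^-}$, with $q^->p^+$. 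For the potential $F$, I would combine $(f_3)$, which makes $f$ subcritical at infinity, with $(f_2)$: integrating the Ambrosetti--Rabinowitz inequality $c_1 F(x,t)\le f(x,t)t$ yields $F(x,t)\le C|t|^{c_1}$ for small $|t|$, so $\int_\Omega F(x,u)\,dx \le C\|u\|^{c_1}$ with $c_1\ge p^+$. Thus, \emph{up to the measure term}, $I_n(u)\ge \frac{1}{p^+}\|u\|^{p^+}-o(\|u\|^{p^+})$ as $\|u\|\to 0$, and choosing $r$ small would produce a positive $\eta$.

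This is exactly where the main obstacle sits: the measure term $-\int_\Omega u\,d\mu_n$ is \emph{linear} in $u$. Bounding it by the generalized H\"older inequality and the embedding, $\left|\int_\Omega u\,d\mu_n\right|\le c\,|\mu_n|_{q'(x)}\,\|u\|$, produces a contribution of order $\|u\|^{1}$, and for $p^+>1$ this dominates $\|u\|^{p^+}$ as $\|u\|\to 0$. Hence a strictly positive lower bound $\eta$ on an arbitrarily small sphere cannot follow from the coercive term alone, and the argument must exploit either a smallness/scaling control of $\mu_n$ (so that $c\,|\mu_n|_{q'(x)}$ is subordinate at the chosen radius $r$) or a suitably reformulated geometry. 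I expect this linear measure contribution --- not the critical exponent --- to be the delicate point requiring genuine care.

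The third bullet is the easy direction. Fixing any $v\in T\setminus\{0\}$ and letting $t\to\infty$, the critical term grows like $t^{q^-}$ while the gradient term grows only like $t^{p^+}$ with $q^->p^+$; since $F$ is nonnegative for large argument (a consequence of $(f_2)$) the potential term only helps, and the measure term is merely linear. Therefore $I_n(tv)\to -\infty$, and choosing $t$ large gives $u=tv$ with $\|u\|>r$ and $I_n(u)\le 0$, completing the mountain‑pass picture.
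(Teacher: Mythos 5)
Your reading of the second bullet as ``$\|u\|=r$'' is the intended one: the paper's own proof works on the sphere $\|u\|=r<1$, so the statement's ``$\|u\|>r$'' is indeed a typo. Bullets 1 and 3 you handle exactly as the paper does (bullet 3: drop $F\ge 0$ using $(f_2)$ and compare $t^{p^+}$ against $t^{q^-}$), and your treatment of the gradient, critical and $F$ terms in bullet 2 is also the paper's in substance: the paper bounds $\int_\Omega F(x,u)\,dx\le\frac{\epsilon+m(\epsilon)}{c_1}\int_\Omega|u|^{q(x)}dx$ via $(f_2)$--$(f_3)$ and then tries to absorb everything of order $r^{q^-}$ into the $r^{p^+}$ term, just as you do (your alternative pointwise bound $F(x,t)\le C|t|^{c_1}$ with $c_1\ge p^+$ serves the same purpose).

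The difference is in the measure term, and there your diagnosis is correct while the paper's proof is not. In the paper's chain of inequalities the quantity $\|\mu_n\|_{q^{\prime}(x)}\|u\|_{q(x)}$, which by the embedding is of order $r$ (linear in $\|u\|$), is silently replaced in the next line by $\|\mu_n\|_{q^{\prime}(x)}|r|^{q^-}$; for $r<1$ one has $r^{q^-}\le r$, so this replacement goes in the wrong direction and is unjustified. With the correct linear bound the estimate reads $I_n(u)\ge\frac{c}{p^+}r^{p^+}-C_1r^{q^-}-C_2\|\mu_n\|_{q^{\prime}(x)}\,r$, and since $p^+>1$ the linear term dominates as $r\to0$, exactly as you observe. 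Positivity on some sphere therefore requires a smallness condition of the type $C_2\|\mu_n\|_{q^{\prime}(x)}<\sup_{r>0}\left(\frac{c}{p^+}r^{p^+-1}-C_1r^{q^--1}\right)$, which is neither assumed nor established in the paper; and since $\mu_n\rightharpoonup\mu$ with $\mu>0$ fixed, it amounts to an unstated smallness restriction on $\mu$ itself. So the obstruction you flagged is a genuine gap in the lemma as the paper states and proves it; your proposal is as complete as the paper's argument really is, and more honest about where the missing hypothesis lies.
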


\begin{proof}
$I_{n}(0)=0$ is obvious. For proving 2., we need the assumptions $(f_{2})$ and $(f_{3})$. From these assumptions we obtain, 
\begin{align*}
\begin{split}
c_{1}\int_{\Omega}F(x,u)dx&\leq \int_{\Omega}f(x,u)u dx\\
&\leq \int_{\Omega}|f(x,u)u|dx\\
&\leq \epsilon\int_{\Omega}|u|^{q(x)}dx+m(\epsilon)\\
&\leq(\epsilon+m(\epsilon))\int_{\Omega}|u|^{q(x)}dx
\end{split}
\end{align*}
This implies $\int_{\Omega}F(x,u)\leq \left(\frac{\epsilon+m(\epsilon)}{c_{1}}\right)\int_{\Omega}|u|^{q(x)}dx.$
Choose $\|u\|=r$ sufficiently small so that$\int_{\Omega}|u|^{q(x)}dx\leq \|u\|_{q(x)}^{q^-}$ since $\|u\|=r < 1$. Now using the Poincar\'{e} inequality, H\"{o}lder inequality and continuous embedding of $W_{0}^{1,p(x)}(\Omega)$ into $L^{q(x)}(\Omega)$, we have
\begin{align*}
\begin{split}
I_{n}(u)&=\int_{\Omega}\frac{{|\nabla u|}^{p(x)}}{p(x)}dx-\int_{\Omega}\frac{|u|^{q(x)}}{q(x)}dx-\int_{\Omega}F(x,u)dx-\int_{\Omega}\mu_{n}udx\\
&\geq \frac{c}{p^{+}}|r|^{p^{+}}-\frac{1}{q^{-}}|r|_{q(x)}^{q^{-}}-\left(\frac{\epsilon+m(\epsilon)}{c_{1}}\right)|r|_{q(x)}^{q^{-}}-\|\mu_{n}\|_{q^{\prime}(x)}\|u\|_{q(x)}\\
&\geq \frac{c}{p^{+}}|r|^{p^{+}}-\frac{1}{q^{-}}|r|^{q^{-}}-\left(\frac{\epsilon+m(\epsilon)}{c_{1}}\right)|r|^{q^{-}}-\|\mu_{n}\|_{q^{\prime}(x)}|r|^{q^{-}}\\
&=\frac{cr^{p^{+}}}{p^{+}}-r^{q^{-}}\left\lbrace\frac{1}{q^{-}}-\left(\frac{\epsilon+m(\epsilon)}{c_{1}}\right)-\|\mu_{n}\|_{q^{\prime}(x)}\right\rbrace 
\end{split}
\end{align*}
Since $q^{-} > p^{+}$ so $I_{n}(u)\geq\eta$ for some $\eta > 0$. We can prove the 3. by using the assumption $(f_{2})$, for $t > 0$ and $u\neq 0$ consider,
\begin{align*}
\begin{split}
I_{n}(tu)&= \int_{\Omega}\frac{1}{p(x)}{|\nabla tu|}^{p(x)}dx - \int_{\Omega}\frac{1}{q(x)}{|tu|}^{q(x)}dx - \int_{\Omega}F(x,tu) dx - \int_{\Omega}tu d\mu_{n}\\ 
&= \int_{\Omega}\frac{t^{p(x)}}{p(x)}{|\nabla tu|}^{p(x)}dx - \int_{\Omega}\frac{t^{q(x)}}{q(x)}{|tu|}^{q(x)}dx - \int_{\Omega}F(x,tu) dx - \int_{\Omega}tu d\mu_{n}\\
&\leq \int_{\Omega}\frac{t^{p(x)}}{p(x)}{|\nabla tu|}^{p(x)}dx - \int_{\Omega}\frac{t^{q(x)}}{q(x)}{|tu|}^{q(x)}dx - \int_{\Omega}tu d\mu_{n}
\end{split}
\end{align*}
This implies
\begin{align}
I_{n}(tu)\leq t^{p^{+}}\int_{\Omega}\frac{1}{p(x)}{|\nabla u|}^{p(x)}dx-t^{q^{-}}\int_{\Omega}\frac{1}{q(x)}{|u|}^{q(x)}dx-t\int_{\Omega}\mu_{n}u dx.
\end{align}
On dividing (3.1) by $t^{p^+}$ and passing the limit $t\rightarrow \infty$ we get, $I_{n}(tu)\rightarrow -\infty$ since $q^{-} > p^{+}$.\\
Hence, $I_{n}(u)$ satisfies the hypothesis of mountain pass theorem.
\end{proof}

\begin{lemma}\label{lem2}
The functional $I_{n}$ satisfies Palais-Smale condition.
\end{lemma}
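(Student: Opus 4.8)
\noindent The plan is to check the two defining ingredients of the Palais--Smale condition in turn: that every Palais--Smale sequence is bounded in $W_0^{1,p(x)}(\Omega)$, and that a bounded such sequence has a strongly convergent subsequence. Fix $n$ and let $(u_k)$ satisfy $I_n(u_k)\to c$ and $I_n'(u_k)\to 0$ in the dual space.

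\noindent \emph{Boundedness.} I would estimate $I_n(u_k)-\frac{1}{c_1}\langle I_n'(u_k),u_k\rangle$ from below, term by term. The gradient part gives $\left(\frac{1}{p^+}-\frac{1}{c_1}\right)\int_\Omega|\nabla u_k|^{p(x)}dx\geq 0$ because $c_1\geq p^+$; the critical part gives $\int_\Omega\left(\frac{1}{c_1}-\frac{1}{q(x)}\right)|u_k|^{q(x)}dx$, which is bounded below by $\left(\frac{1}{c_1}-\frac{1}{q^-}\right)\int_\Omega|u_k|^{q(x)}dx$ with a \emph{strictly} positive coefficient since $c_1<q^-\leq q(x)$; and the Ambrosetti--Rabinowitz type hypothesis $(f_2)$ gives $-\int_\Omega F(x,u_k)dx+\frac{1}{c_1}\int_\Omega f(x,u_k)u_k\,dx\geq 0$. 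The measure term is linear and, as $\mu_n$ is smooth, is absorbed by the generalized H\"older inequality as $O(\|u_k\|)$. Since the left-hand side is at most $c+o(1)(1+\|u_k\|)$, this controls $\int_\Omega|u_k|^{q(x)}dx$ in terms of $\|u_k\|$; feeding this into the identity $I_n(u_k)=c+o(1)$ and using $(f_3)$ to dominate $\int_\Omega F(x,u_k)dx$ by a small multiple of $\int_\Omega|u_k|^{q(x)}dx$ yields $\int_\Omega|\nabla u_k|^{p(x)}dx\leq C(1+\|u_k\|)$, and the relation $\|u_k\|^{p^-}\leq\int_\Omega|\nabla u_k|^{p(x)}dx$ from the modular--norm Proposition (valid once $\|u_k\|>1$) then bounds $\|u_k\|$ because $p^->1$.

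\noindent \emph{Convergence.} By reflexivity, pass to a subsequence with $u_k\rightharpoonup u$ in $W_0^{1,p(x)}(\Omega)$. Testing the derivative against $u_k-u$ gives $\langle I_n'(u_k),u_k-u\rangle\to 0$, since $I_n'(u_k)\to 0$ and $(u_k-u)$ is bounded. I would then show the subordinate terms vanish: the compact embedding $W_0^{1,p(x)}(\Omega)\hookrightarrow L^{q(x)}(\Omega)$ makes $\int_\Omega|u_k|^{q(x)-2}u_k(u_k-u)\,dx\to 0$, the growth condition $(f_3)$ together with the same compactness gives $\int_\Omega f(x,u_k)(u_k-u)\,dx\to 0$, and the smoothness of $\mu_n$ yields $\int_\Omega(u_k-u)\,d\mu_n\to 0$. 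What remains is $\int_\Omega|\nabla u_k|^{p(x)-2}\nabla u_k\cdot\nabla(u_k-u)\,dx\to 0$; combined with $u_k\rightharpoonup u$, the $(S_+)$ property of the operator $-\Delta_{p(x)}$ upgrades weak to strong convergence, giving $u_k\to u$ as required.

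\noindent The main obstacle I foresee is the critical exponent. The embedding into $L^{q(x)}$ is compact only on the region where $q(x)<p^\ast(x)$ strictly, so where $q(x)$ attains the critical value $p^\ast(x)$ the step asserting that the $|u_k|^{q(x)-2}u_k$ term loses no mass can fail through concentration at points. I would control this using the concentration--compactness principle, which confines any loss of compactness to an at most countable set of atoms, together with a quantitative bound on the Palais--Smale level $c$ that rules out nontrivial concentration and so restores the compactness needed to conclude $u_k\to u$.
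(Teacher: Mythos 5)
Your proposal follows essentially the same route as the paper: boundedness via the Ambrosetti--Rabinowitz-type combination $I_n(u_k)-\frac{1}{c_1}\langle I_n'(u_k),u_k\rangle$ with the measure term absorbed as $O(\|u_k\|)$, and compactness recovered by the Bonder--Silva concentration--compactness principle together with a bound on the level $c$ that forces the set of atoms to be empty (a caveat the paper's own proof also carries, since the lemma is stated without any level restriction). The only notable difference is your finish via testing against $u_k-u$ and invoking the $(S_+)$ property of $-\Delta_{p(x)}$, which is in fact a cleaner way to upgrade to strong convergence than the paper's direct assertion that absence of atoms yields $\nabla u_{m,n}\to\nabla u_n$ in $L^{p(x)}(\Omega)$.
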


\begin{proof}
Let $(u_{m,n})$ be a Palais-Smale sequence such that $I(u_{m,n})\rightarrow c$ and $I^{\prime}(u_{m,n})\rightarrow 0$ in $(W_{0}^{1,p(x)}(\Omega))^{\prime}.$ We first show that $(u_{m,n})$ is bounded in $W_{0}^{1,p(x)}(\Omega)$. We will prove it by contradiction. Let $\|u_{m,n}\|\rightarrow \infty$ as $m\rightarrow\infty$. Then we have,
\begin{align*}
\begin{split}
I_{n}(u_{m,n})-\frac{1}{c_{1}}\langle I^{\prime}(u_{m,n}),u_{m,n}\rangle&=\int_{\Omega}\frac{|\nabla u_{m,n}|^{p(x)}}{p(x)}dx-\frac{1}{c_{1}} \int_{\Omega}|\nabla u_{m,n}|^{p(x)}dx-\int_{\Omega}\frac{|u_{m,n}|^{q(x)}}{q(x)}dx\\
&+\frac{1}{c_{1}}\int_{\Omega}|u_{m,n}|^{q(x)}dx-\int_{\Omega}F(x,u_{m,n})dx
-\int_{\Omega}u_{m,n}\mu_{n}dx\\
&+\frac{1}{c_{1}}\int_{\Omega}f(x,u_{m,n})u_{m,n}dx+\frac{1}{c_{1}}\int_{\Omega}u_{m,n}\mu_{n}dx\\
\geq& \frac{1}{p^{+}}\int_{\Omega}{|\nabla u_{m,n}|^{p(x)}}-\frac{1}{c_{1}} \int_{\Omega}|\nabla u_{m,n}|^{p(x)}dx-\frac{1}{q^{-}}\int_{\Omega}|u_{m,n}|^{q(x)}dx\\
&+\frac{1}{c_{1}}\int_{\Omega}|u_{m,n}|^{q(x)}dx-\int_{\Omega}F(x,u_{m,n})dx
-\int_{\Omega}u_{m,n}\mu_{n}dx\\
&+\frac{1}{c_{1}}\int_{\Omega}f(x,u_{m,n})u_{m,n}dx+\frac{1}{c_{1}}\int_{\Omega}u_{m,n}\mu_{n}dx\\
=&\left(\frac{1}{p^{+}}-\frac{1}{c_{1}}\right)\int_{\Omega}{|\nabla u_{m,n}|^{p(x)}}dx+\left(\frac{1}{c_{1}}-\frac{1}{q^{-}}\right)\int_{\Omega}|u_{m,n}|^{q(x)}dx\\
&+\frac{1}{c_{1}}\left(\int_{\Omega}f(x,u_{m,n})u_{m,n}dx-c_{1}\int_{\Omega}F(x,u_{m,n})dx\right)\\
&-\left(1-\frac{1}{c_{1}}\right)\int_{\Omega}u_{m,n}\mu_{n}dx
\end{split}
\end{align*}
Using the assumption $(f_{2})$, we get
\begin{align*}
\begin{split}
I_{n}(u_{m,n})-\frac{1}{c_{1}}\langle I^{\prime}(u_{m,n}),u_{m,n}\rangle=\left(\frac{1}{p^{+}}-\frac{1}{c_{1}}\right)\int_{\Omega}{|\nabla u_{m,n}|^{p(x)}}dx-A\int_{\Omega}u_{m,n}\mu_{n}dx
\end{split}
\end{align*}
where $A=\left(1-\frac{1}{c_{1}}\right) > 0$. Furthermore on applying the Poincar\'{e} inequality, H\"{o}lder inequality, embedding of $W_{0}^{1,p(x)}(\Omega)$ into $L^{p(x)}(\Omega)$ and the fact that $\|\nabla u\|_{p(x)}$ and $\|u\|_{1,p(x)}$ are equivalent norm on $W_{0}^{1,p(x)}(\Omega)$ we get
\begin{align}
\begin{split}
I_{n}(u_{m,n})-\frac{1}{c_{1}}\langle I^{\prime}(u_{m,n}),u_{m,n}\rangle\geq
c^{\prime}\left(\frac{1}{p^{+}}-\frac{1}{c_{1}}\right)\|u_{m,n}\|^{p^{-}}-A\|\mu_{n}\|_{p^{\prime}(x)}\|u_{m,n}\|
\end{split}
\end{align}
Now on dividing both sides of (3.2) by $\|u_{m,n}\|$ and passing the limit $m\rightarrow\infty$ we get $0\geq\infty$ as $p^{-} > 1$ which is absurd. Hence $(u_{m,n})$ is bounded in $W_{0}^{1,p(x)}(\Omega)$. Since this is a reflexive space, there exists a subsequence say $(u_{m,n})$ which converges weakly to $u_{n}$ in $W_{0}^{1,p(x)}(\Omega)$. To prove this convergence to be a strong convergence we will use the concentration compactness principle for variable exponents (refer \cite{Bonder2}, Theorem 1.1) from which we have\\
$|u_{m,n}|^{q(x)}\rightharpoonup\nu=|u_{n}|^{q(x)}+\sum_{i\in J}\nu_{i}\delta x_{i}$\\
$|\nabla u_{m,n}|^{p(x)}\rightharpoonup \mu\geq |\nabla u_{n}|^{p(x)}+\sum_{i\in J}\mu_{i}\delta x_{i}$\\
$ S\nu_{i}^{\frac{1}{p^{\ast}(x_{i})}}\leq\mu_{i}^{\frac{1}{p(x_{i})}}\;\forall\; i\in J$, where $S :=\underset{\phi\in C_{0}^{\infty}(\Omega)}{\text {inf}}\frac{{\parallel |\nabla\phi| \parallel}_{L^{p(x)}(\Omega)}}{\|\phi\|_{L^{q(x)}(\Omega)}}$ and $J$ is a finite set. $(\nu_{i})_{i\in J}$ and $(\mu_{i})_{i\in J}$ are positive numbers and points $(x_{i})_{i\in J}$ belongs to the critical set\\
 $A=\lbrace x\in \Omega: q(x)=p^{\ast}(x)\rbrace$.
\begin{claim*}
$J$ is empty.
\end{claim*}
\begin{proof}
Let $J\neq \phi$. Define $\psi_{i,\epsilon}(x)=\psi\left(\frac{x-x_{i}}{\epsilon}\right)$. 
\begin{align*}
\begin{split}
0=&\langle I^{\prime}(u_{m,n}),u_{m,n}\psi_{i,\epsilon}\rangle\\
=&\int_{\Omega}{|\nabla u_{m,n}|}^{p(x)-2}\nabla u_{m,n}\nabla(u_{m,n}\psi_{i,\epsilon})dx-\int_{\Omega}|u_{m,n}|^{q(x)-2}u_{m,n} u_{m,n}\psi_{i,\epsilon}dx\\
&-\int_{\Omega}f(x,u_{m,n})u_{m,n}\psi_{i,\epsilon}dx-\int_{\Omega}\mu_{n}u_{m,n}\psi_{i,\epsilon}dx\\
=& \int_{\Omega}{|\nabla u_{m,n}|}^{p(x)}\nabla\psi_{i,\epsilon}dx+\int_{\Omega}({|\nabla u_{m,n}|}^{p(x)-2}\nabla u_{m,n}\nabla\psi_{i,\epsilon})u_{m,n}dx\\
&-\int_{\Omega}|u_{m,n}|^{q(x)}\psi_{i,\epsilon}dx-\int_{\Omega}f(x,u_{m,n})u_{m,n}\psi_{i,\epsilon}dx-\int_{\Omega}\mu_{n}u_{m,n}\psi_{i,\epsilon}dx
\end{split}
\end{align*}
In addition to this we have
\begin{align}
\begin{split}
0=\underset{m\rightarrow\infty}{\text{lim}}&\langle I^{\prime}(u_{m,n}),\phi\rangle\\
=\underset{m\rightarrow\infty}{\text{lim}}&\left[\int_{\Omega}{|\nabla u_{m,n}|}^{p(x)-2}\nabla u_{m,n}\nabla\phi\;dx-\int_{\Omega}|u_{m,n}|^{q(x)-2}u_{m,n}\phi\;dx\right.\\
&\left.-\int_{\Omega}f(x,u_{m,n})\phi\;dx-\int_{\Omega}\mu_{n}\phi\;dx\right] 
\end{split}
\end{align}
We also have 
\begin{align}
\begin{split}
|\nabla u_{m,n}|^{p(x)-2}\nabla u_{m,n}&\rightharpoonup |\nabla u_{n}|^{p(x)-2}\nabla u_{n}\; \text{in}\; L^{p^{\prime}(x)}(\Omega)\\
|u_{m,n}|^{q(x)-2}u_{m,n}&\rightharpoonup |u_{n}|^{q(x)-2}u_{n}\;\; \;\;\;\;\;\text{in}\; L^{q^{\prime}(x)}(\Omega)
\end{split}
\end{align}
Since, $u_{m,n}\rightharpoonup u_{n}$ in $W_{0}^{1,p(x)}(\Omega)$ and $W_{0}^{1,p(x)}(\Omega)$ is compactly embedded in $L^{p(x)}(\Omega)$ hence $u_{m,n}\rightarrow u_{n}$ in $L^{p(x)}(\Omega)$. By the Egoroff's theorem, $u_{m,n}\rightarrow u_{n}$ a.e. in $\Omega$ upto a subsequence. Also, by the continuity of $f$ with respect to the second variable we conclude
 $f(x,u_{m,n})\rightarrow f(x,u_{n})$ which implies $\int_{\Omega}f(x,u_{m,n})\phi\rightarrow\int_{\Omega}f(x,u_{n})\phi$ by the Dominated convergence theorem.\\
Hence from (3.3), 
\begin{align*}
\begin{split}
\int_{\Omega}{|\nabla u_{n}|}^{p(x)-2}\nabla u_{n}\nabla\phi\;dx-\int_{\Omega}|u_{n}|^{q(x)-2}u_{n}\phi\;dx-\int_{\Omega}f(x,u_{n})\phi\;dx-\int_{\Omega}\mu_{n}\phi\;dx=0
\end{split}
\end{align*}
implying that $u_{n}$ is a weak solution to the sequence of problems $(P_{n})$. Thus,
\begin{align}
\begin{split}
0=\langle I^{\prime}(u_{n}),u_{n}\psi_{i,\epsilon}\rangle=&\int_{\Omega}|\nabla u_{n}|^{p(x)}\nabla \psi_{i,\epsilon}dx+\int_{\Omega}({|\nabla u_{n}|}^{p(x)-2}\nabla u_{n}\nabla\psi_{i,\epsilon})u_{n}dx\\
&-\int_{\Omega}|u_{n}|^{q(x)}\psi_{i,\epsilon}dx-\int_{\Omega}f(x,u_{n})u_{n}\psi_{i,\epsilon}dx-\int_{\Omega}\mu_{n}u_{n}\psi_{i,\epsilon}dx
\end{split}
\end{align}
Substituting $\phi=u_{m,n}\psi_{i,\epsilon}$ in (3.3) and then subtracting (3.3) from (3.5), we get
\begin{equation}
\begin{split}
0=\underset{m\rightarrow\infty}{\text{lim}}&\langle I^{\prime}(u_{m,n}),u_{m,n}\psi_{i,\epsilon}\rangle-\langle I^{\prime}(u_{n}),u_{n}\psi_{i,\epsilon}\rangle\\
=\underset{m\rightarrow\infty}{\text{lim}}&\left[\int_{\Omega}|\nabla u_{n}|^{p(x)}\nabla \psi_{i,\epsilon}dx+\int_{\Omega}({|\nabla u_{m,n}|}^{p(x)-2}\nabla u_{m,n}\nabla\psi_{i,\epsilon})u_{m,n}\;dx\right.\\
&-\int_{\Omega}|u_{m,n}|^{q(x)}\psi_{i,\epsilon}\;dx-\int_{\Omega}f(x,u_{m,n})\psi_{i,\epsilon}\;dx\left.-\int_{\Omega}\mu_{n}u_{m,n}\psi_{i,\epsilon}\;dx\right]\\
&-\left[\int_{\Omega}|\nabla u_{n}|^{p(x)}\nabla \psi_{i,\epsilon}dx-\int_{\Omega}\mu_{n}u_{n}\psi_{i,\epsilon}dx-\int_{\Omega}|u_{n}|^{q(x)}\psi_{i,\epsilon}dx\right.\\
&\left.-\int_{\Omega}f(x,u_{n})u_{n}\psi_{i,\epsilon}dx+\int_{\Omega}({|\nabla u_{n}|}^{p(x)-2}\nabla u_{n}\nabla\psi_{i,\epsilon})u_{n}dx\right]
\end{split}
\end{equation}
From $(f_{3})$ we have, $|f(x,t)t| < \frac{\epsilon}{2\tilde{c}}t^{q(x)}+m(\epsilon)\;\forall\;t \in\mathbb{R}$ and a.e. in $\Omega$. Let $\|u\|_{q(x)}^{q^{+}}=\tilde{c}$. Choose $\delta=\frac{\epsilon}{2m(\epsilon)} > 0$ and $F\subseteq\Omega$ such that $|F| < \delta$. Then
\begin{align*}
\begin{split}
\bigg|\int_{F}f(x,u_{m,n})u_{m,n}dx\bigg| &\leq \int_{\Omega}\big|f(x,u_{m,n})u_{m,n}\big|dx\\
&\leq \int_{F}m(\epsilon)dx+\frac{\epsilon}{2\tilde{c}}\int_{F}|u|^{q(x)}dx\\
&\leq m(\epsilon)|F|+\frac{\epsilon}{2\tilde{c}}\|u\|_{q(x)}^{q^{+}}\\
& < m(\epsilon)\frac{\epsilon}{2m(\epsilon)}+\frac{\epsilon}{2\tilde{c}}\tilde{c}\\
&=\epsilon
\end{split}
\end{align*}
Hence, $\lbrace f(x,u_{m,n})u_{m,n}dx:m\in\mathbb{N}\rbrace$ is equiabsolutely continuous and therefore by the Vitali convergence theorem $\int_{\Omega}f(x,u_{m,n})u_{m,n}dx\rightarrow \int_{\Omega}f(x,u_{n})u_{n}dx$ as $ m\rightarrow\infty$. This implies $\int_{\Omega}f(x,u_{m,n})u_{m,n}\psi_{i,\epsilon}dx\rightarrow \int_{\Omega}f(x,u_{n})u_{n}\psi_{i,\epsilon}dx$ as $ m\rightarrow\infty$. We further have from (3.4) and weak convergence of $(u_{m,n})$ that
\begin{align*}
\begin{split}
\int_{\Omega}|\nabla u_{m,n}|^{p(x)-2}\nabla u_{m,n}\nabla\psi_{i,\epsilon}dx&\rightarrow \int_{\Omega}|\nabla u_{n}|^{p(x)-2}\nabla u_{n}\nabla\psi_{i,\epsilon}dx\\
\int_{\Omega}\mu_{n}\psi_{i,\epsilon}u_{m,n}dx&\rightarrow \mu_{n}\psi_{i,\epsilon}u_{n}dx.
\end{split}
\end{align*}
Using these results in (3.6),
\begin{align}
\begin{split}
0=&\underset{m\rightarrow\infty}{\text{lim}}\left(\int_{\Omega}|\nabla u_{m,n}|^{p(x)}\nabla\psi_{i,\epsilon}dx-\int_{\Omega}|u_{m,n}|^{q(x)}\psi_{i,\epsilon}dx\right)\\
&-\left( \int_{\Omega}|\nabla u_{n}|^{p(x)}\nabla\psi_{i,\epsilon}dx-\int_{\Omega}|u_{n}|^{q(x)}\psi_{i,\epsilon}dx\right)
\end{split}
\end{align}
Now on applying concentration compactness principle in (3.7), we have \\
$\psi_{i,\epsilon}d\mu-\psi_{i,\epsilon}d\nu=0$. As $\epsilon\rightarrow 0, \mu_{i}=\nu_{i}$.\\
Again, using $(f_{2})$, H\"{o}lder inequality and embedding theorem (proposition 2.3),
\begin{align*}
\begin{split}
c=&\underset{m\rightarrow\infty}{\text{lim}}(I(u_{m,n})-\frac{1}{p^{+}}\langle I^{\prime}(u_{m,n}),u_{m,n}\rangle)\\
=&\underset{m\rightarrow\infty}{\text{lim}}\left[\int_{\Omega}\frac{|\nabla u_{m,n}|^{p(x)}}{p(x)}dx-\int_{\Omega}\frac{|u_{m,n}|^{q(x)}}{q(x)}dx-\int_{\Omega}F(x,u_{m,n})dx-\int_{\Omega}\mu_{n}u_{m,n}dx\right.\\
&\left.-\frac{1}{p^{+}}\left\lbrace \int_{\Omega}|\nabla u_{m,n}|^{p(x)}dx-\int_{\Omega}|u_{m,n}|^{q(x)}dx-\int_{\Omega}f(x,u_{m,n})u_{m,n}dx-\int_{\Omega}\mu_{n}u_{m,n}dx\right\rbrace\right] \\
\geq& \underset{m\rightarrow\infty}{\text{lim}}\int_{\Omega}\left(\frac{1}{p^{+}}-\frac{1}{q(x)}\right)|u_{m,n}|^{q(x)}dx-\int_{\Omega}\mu_{n}u_{m,n}dx+\frac{1}{p^{+}}\int_{\Omega}\mu_{n}u_{m,n}dx\\
\geq& \underset{m\rightarrow\infty}{\text{lim}}\int_{\Omega}\left(\frac{1}{p^{+}}-\frac{1}{q(x)}\right)|u_{m,n}|^{q(x)}dx-A^{\prime}\|\mu_{n}\|_{p^{\prime}(x)}\|u_{m,n}\|
\end{split}
\end{align*}
where, $A^{\prime}=(1-\frac{1}{p^{+}}) > 0$. This implies
\begin{equation*}
c+A^{\prime}\|\mu_{n}\|_{p^{\prime}(x)}\|u_{m,n}\|\geq \underset{m\rightarrow\infty}{\text{lim}}\int_{\Omega}\left(\frac{1}{p^{+}}-\frac{1}{q(x)}\right)|u_{m,n}|^{q(x)}dx
\end{equation*}
We already proved that $(u_{m,n})$ is bounded so let $M > 0$ be an upper bound of $\left(\|u_{m,n}\|\right) $ for some fixed $n$.
Define $q_{A_{\delta}}^{-}:=\underset{A{\delta}}{\text{inf}}\;q(x)$ so
\begin{align*}
\begin{split}
c+A^{\prime}\|\mu_{n}\|_{p^{\prime}(x)}M&\geq  \underset{m\rightarrow\infty}{\text{lim}}\int_{\Omega}\left(\frac{1}{p^{+}}-\frac{1}{q(x)}\right)|u_{m,n}|^{q(x)}dx\\
&\geq  \underset{m\rightarrow\infty}{\text{lim}}\int_{A_{\delta}}\left(\frac{1}{p^{+}}-\frac{1}{q(x)}\right)|u_{m,n}|^{q(x)}dx\\
&\geq \underset{m\rightarrow\infty}{\text{lim}}\int_{A_{\delta}}\left(\frac{1}{p^{+}}-\frac{1}{q_{A_{\delta}}^{-}}\right)|u_{m,n}|^{q(x)}dx\\
&=\left(\frac{1}{p^{+}}-\frac{1}{q_{A_{\delta}}^{-}}\right)\left(\int_{A_{\delta}}|u_{n}|^{q(x)}dx+\sum_{i\in J}\nu_{i}\right) \\
&\geq \left(\frac{1}{p^{+}}-\frac{1}{q_{A_{\delta}}^{-}}\right)\nu_{i}\\
&\geq \left(\frac{1}{p^{+}}-\frac{1}{q_{A_{\delta}}^{-}}\right)S^{N}
\end{split}
\end{align*}
Let us denote $A^{\prime}\|\mu_{n}\|_{p^{\prime}(x)}M=M^{\prime}$. Since, $\delta > 0$ is arbitrary and $q(x)$ is continuous we can say 
$c+M^{\prime}\geq \left(\frac{1}{p^{+}}-\frac{1}{q_{A}^{-}}\right)S^{N}$. This further implies $c\geq \left(\frac{1}{p^{+}}-\frac{1}{q_{A}^{-}}\right)S^{N}-M^{\prime}$.\\
Hence, for $c < \left(\frac{1}{p^{+}}-\frac{1}{q_{A}^{-}}\right)S^{N}-M^{\prime}$, index set $J$ is empty.
\end{proof}
\noindent We have proved that
$u_{m,n}\rightarrow u_{n}$ in $L^{q(x)}(\Omega)$ and $\nabla u_{m,n}\rightarrow \nabla u_{n}$ in $L^{p(x)}(\Omega)$ which were obtained by concentration compactness principle.
Since $p(x) < q(x)$ we have embedding of $L^{q(x)}(\Omega)$ in $L^{p(x)}(\Omega)$. Thus we get
$u_{m,n}\rightarrow u_{n}$ in $L^{p(x)}(\Omega)$ and $\nabla u_{m,n}\rightarrow \nabla u_{n}$ in $L^{p(x)}(\Omega)$. Hence $u_{m,n}\rightarrow u_{n}$ in $W_{0}^{1,p(x)}(\Omega)$.
Therefore, the functional $I_{n}$ satisfies the Palais-Smale condition.
\end{proof}
\noindent Therefore, from Lemma (3.1) and (3.2) we conclude that there exist critical point $u_{n} \in W_{0}^{1,p(x)}(\Omega)$ corresponding to each $\mu_{n}$ for the sequence of problems $(P_{n})$.\\
Now, choose a test function $v=T_k(u_n)$, where $T_k$ is a truncation operator defined as \[T_k(t)=\begin{cases}
 t, & |t| < k \\
 k{\text{sign}}(t), & |t |\geq k.
 \end{cases}\]
 Clearly $T_k(u_{n})\in W_0^{1,p(x)}(\Omega)$. Now 
 \begin{align*}
 \{|\nabla u_n|> t\} & = \{|\nabla u_n|> t,|u_n|\leq k\} \cup \{|\nabla u_n| > t,|u_n| > k\}
 \\& \subset \{|\nabla u_n|> t,|u_n|\leq k\} \cup \{|u_n| > k\}\subset \Omega.
 \end{align*}
 Hence, by the subadditivity of Lebesgue measure, we have
 \begin{equation}\label{eq5}
 |\{|\nabla u_n|> t\}| \leq |\{|\nabla u_n|> t,|u_n|\leq k\}| + |\{|u_n| > k\}|.
 \end{equation}
Hence we have
 \begin{align*}
 \int_\Omega |\nabla T_k(u_n)|^{p(x)}dx & \leq \lambda\int_\Omega |u_n|^{q(x)-2}u_n T_k(u_n)dx+\int_{\Omega}f(x,u_n)T_k(u_n)dx+\int_{\Omega}\mu_nT_k(u_n)dx
 \\& \leq k|\Omega|^{1/q(x)}\|u_n\|_{q(x)}^{q(x)/q'(x)}+\epsilon\int _{(|u_n| > T)}|u_n|^{q(x)-1}T_k(u_n)dx+\int _{\Omega\times[-T,T]}f(x,u_n)T_k(u_n)dx
 \\& ~~~+\int_{\Omega}\mu_nT_k(u_n)dx
 \\&  \leq C_1(q(x),\Omega)k+C_2(\epsilon,\Omega)k+k\int_{\Omega}\mu_n\;dx
 \\& \leq Ck,
 \end{align*}
 where we have used the condition $(f_3)$ to bound the second integral and the $L^1$-bound of the sequence $(\mu_n)$ to bound the third integral.Thus, $\|\nabla T_{k}(u_{n})\|^{\gamma}_{p(x)}\leq Ck\;\forall\;k > 1,$ where  \[\gamma=\begin{cases}
 p^{+}, & \|\nabla T_{k}(u_{n})\|_{p(x)} < 1\\
 p^{-}, & \|\nabla T_{k}(u_{n})\|_{p(x)} > 1.
 \end{cases}\]
Define $A_{1}=\{x\in\Omega:|u_n(x)| > k\}$. On using the Poincar\'{e} and the generalized H\"{o}lder inequality, we get
\begin{align*}
\begin{split}
k|\left\lbrace|u_{n}|>k\right\rbrace|&=\int_{\left\lbrace |u_n| > k \right\rbrace} |T_k(u_n)|dx\\
&\leq \int_{\Omega}|T_{k}(u_{n})|dx\leq \left(\frac{1}{p^{-}}-\frac{1}{p^{-\prime}}\right)(|\Omega|+1)^{\frac{1}{p^{-\prime}}}\|T_{k}(u_{n})\|_{p(x)}\leq C_{3}k^{\frac{1}{\gamma}}
\end{split}
\end{align*} 
From this we get, $|\lbrace|u_{n}| > k\rbrace|\leq \frac{c_{3}}{k^{1-\frac{1}{\gamma}}}\;\forall\; k > 1$.
Hence, $(u_{n})$ is bounded in $M^{1-\frac{1}{\gamma}}(\Omega)$.\\
Now again on restricting the integral over the set defined as $A_{2}=\{x\in\Omega:|u_n(x)|\leq k\}$.\\
\begin{align*}
\begin{split}
\int_{\lbrace|u_{n}|\leq k\rbrace}|\nabla T_{k}(u_{n})|^{p(x)}dx&=\int_{\lbrace|u_{n}|\leq k\rbrace}|\nabla(u_{n})|^{p(x)}dx\\
&\geq \int_{\lbrace|\nabla u_{n}| > t, |u_{n}|\leq k\rbrace}|\nabla u_{n}|^{p(x)}dx\\
&\geq \int_{\lbrace|\nabla u_{n}| > t, |u_{n}|\leq k\rbrace}|t|^{p(x)}dx\\
&\geq t^{p^{-}}|\lbrace|\nabla u_{n}| > t, |u_{n}|\leq k\rbrace|
\end{split}
\end{align*}
Thus $Ck\geq t^{p^{-}}|\lbrace|\nabla u_{n}| > t, |u_{n}|\leq k\rbrace|$ which implies $|\lbrace|\nabla u_{n}| > t, |u_{n}|\leq k\rbrace| \leq \frac{Ck}{t^{p^{-}}}$.
Hence, from (3.8) we have $\lbrace|\nabla u_{n}| > t\rbrace\leq \frac{Ck}{t^{p^{-}}}+\frac{C_{3}}{k^{1-\frac{1}{\gamma}}}\;\forall\; k > 1$.\\
On choosing $k=t^{\frac{\gamma p^{-}}{2\gamma-1}}$ we obtain $|\lbrace|\nabla u_{n}| > t\rbrace|\leq \frac{C_{4}}{t^{\frac{(\gamma -1)p^{-}}{2\gamma-1}}}\;\forall\;t\geq 1$, where $C_{4}={\text{max}}\lbrace C , C_{3}\rbrace$.
This implies that $(\nabla u_{n})$ is bounded in $M^{{\frac{(\gamma -1)p^{-}}{2\gamma-1}}}(\Omega)$. Then $(u_{n})$ is bounded in $W_{0}^{1,s(x)}(\Omega)$ for $s(x) < s$, where $s={\text{min}}\left\lbrace 1-\frac{1}{\gamma},{\frac{(\gamma -1)p^{-}}{2\gamma-1}}\right\rbrace$. We know that $W_{0}^{1,s(x)}(\Omega)$ is again a reflexive space. Hence on repeating the arguments used to prove $u_{m,n}\rightarrow u_{n}$ in $W_{0}^{1,p(x)}(\Omega)$ we find that $u_{n}\rightarrow u$ in $W_{0}^{1,s(x)}(\Omega)$. This limit $u$ is a nontrivial  weak solution of problem $(P)$ in $W_{0}^{1,s(x)}(\Omega)$.

\section*{Acknowledgement}
The author  Amita Soni thanks the Department of
Science and Technology (D. S. T), Govt. of India for financial
support. Both the authors also acknowledge the facilities received
from the Department of Mathematics, National Institute of Technology
Rourkela.

 {\sc Amita Soni} and {\sc D. Choudhuri}\\
Department of Mathematics,\\
National Institute of Technology Rourkela, Rourkela - 769008,
India\\
e-mails: soniamita72@gmail.com and dc.iit12@gmail.com.

\end{document}